\date{}
\theoremstyle{plain}
\newtheorem*{theorem*}{Theorem}
\newtheorem{theor}{Theorem}[section]
\newtheorem*{theor*}{Theorem}
\newtheorem*{conj*}{Conjecture}
\newtheorem{thm}[theor]{Theorem}
\newtheorem{prop}[theor]{Proposition}
\newtheorem{defin}[theor]{Definition}
\newtheorem{lemma}[theor]{Lemma}
\theoremstyle{remark}
\def\Bo{{\mathcal B}}
\def\Be{\Bo_{d}(\eps)}
\def\Bae{\Bo_{d}^0(\eps)}
\def\Bep{\wt \Bo_{d}(\eps)}
\def\Ao{{\mathcal A}_d}
\def\Ae{\Ao(\eps)}
\def\Aeg{\Ao (\eps^{1+\ga})}
\def\Aeb{\Ao (\eps^{\bt})}
\def\Lc{{\mathcal L}}
\def\Rc{{\mathcal R}}
\def\Nc{{\mathcal N}}
\def\Cp{{\mathcal C}_+}
\def\Cm{{\mathcal C}_-}
\def\R{{\mathbb R}}
\def\N{{\mathbb N}}
\def\Z{{\mathbb Z}}
\def\Prob{{\mathbb P}}
\def\RR{{\mathbb R}}
\def\Rm{\RR ^m}
\def\r{\right}
\def\wt{\widetilde}
\def\d{{\rm disp}}
\def\kd{ \mbox{\rm $k$-disp}}
\newcommand{\bt}{\beta}
\newcommand{\ga}{\gamma}
\newcommand{\de}{\delta}
\newcommand{\eps}{\varepsilon}
\def\qand{\quad \mbox{ and } \quad}
\title{New bounds on the minimal dispersion}
\author{
A. E. Litvak
\and
G. V. Livshyts
}
\newcommand\address{\noindent\leavevmode

\medskip
\noindent
Alexander E. Litvak\\
Dept.~of Math.~and Stat.~Sciences,\\
University of Alberta, \\
Edmonton, AB, Canada, T6G 2G1.\\
\texttt{\small
e-mail:  aelitvak@gmail.com}\\

\medskip

\noindent
Galyna V. Livshyts,\\
School~of Math., GeorgiaTech,\\
686 Cherry street,\\
Atlanta, GA 30332, USA.\\
\texttt{\small
e-mail:   glivshyts6@math.gatech.edu}\\
}
\begin{document}

\maketitle

\begin{abstract}
 We provide a new construction for a set of boxes approximating
 axis-parallel boxes of fixed volume in $[0, 1]^d$. This improves
 upper bounds for the minimal dispersion of a point set
 in the unit cube and its inverse in both the periodic and non-periodic settings
 in certain regimes. In the case of random choice of points our bounds are sharp
 up to double logarithmic factor.  We also apply our construction to $k$-dispersion.
\end{abstract}

\bigskip

{\small
\noindent{\bf AMS 2010 Classification:}
primary: 52B55, 52A23;
secondary: 68Q25, 65Y20.\\
\noindent
{\bf Keywords:} complexity, dispersion, largest empty box, torus


\section{Introduction}
\label{intro}

The dispersion of a given subset $P$ of the  $d$-dimensional unit cube $[0,1]^d$ is the supremum over
volumes of axis-parallel boxes in the cube that do not intersect $P$, where by an axis-parallel box we
mean a polytope with facets parallel to coordinate hyperplanes. The minimal dispersion is the infimum
of the dispersions of all possible subsets $P\subset [0,1]^d$ of cardinality $n$.
This definition was introduced in \cite{RT}  modifying a  notion from \cite{Hl}.
This notion has many applications in different areas and attracted a significant attention of researchers
in recent years.  We refer to \cite{AHR, DJ, Rud,  MU} and references therein for the
history of estimating the minimal dispersion and relations to other branches of mathematics, to
\cite{BC, HKKR, AEL, KMK, Sos, UV, UV1} for recent developments and best known bounds and to
 \cite{Kr, VT, MU-lat} for the dispersion of certain sets.
 In this note we improve some upper bounds for the minimal dispersion on the cube
 and for its inverse function. We also discuss corresponding bounds on the torus
 and $k$-dispersion (the notion introduced in \cite{HPUV}, which slightly modifies the standard
 definition  by allowing to have at most $k$ points in the intersection of  a given set $P$ and
 an  axis-parallel box). An important feature of our results is that we consider the
 dispersion and its inverse as functions of two variables without fixing one of the parameters.
The improvement of previous results is achieved by a new
construction of an approximating  family of axis-parallel boxes  (periodic or non-periodic)
needed to be checked for a random choice of points.

\subsection{Notation}
\label{notat}

We denote $Q_d:=[0, 1]^d$. We will use the notation $|\cdot|$ for either
cardinality of a finite set or for the  $d$-dimensional volume of a
 measurable subset of $\R^d$ (the precise meaning will be always clear from the context).
The set of all axis-parallel boxes contained in the cube is denoted by
$\Rc _d$, that is
\begin{equation}\label{setrd}
  \Rc _d :=\left\{ \prod _{i=1}^d I_i  \,\,\, |\,\,\,  I_i =[a_i, b_i) \subset [0, 1] \r\}.
\end{equation}
Given a finite set $P\subset Q_d$ its dispersion  is defined as
$$
   \d (P) = \sup \{|B| \, \, \, | \, \, \, B\in \Rc _d, \, B\cap P=\emptyset\} .
$$
The minimal dispersion is defined as the function of two variables $n$ and $d$ as
$$
 \d^* (n, d) = \inf_{P\subset Q_d\atop |P|=n} \d(P) .
$$
Its inverse function is
$$
  N(\eps, d) = \min\{ n\in \N\,  | \,\,   \d^* (n, d)\leq \eps\} .
$$
In this paper it will be more convenient to obtain bounds for  the function $N(\eps, d)$,
then bounds for $\d^* (n, d)$ follow automatically.

Letters $C, C', C_0, C_1, c, c_0, c_1$, etc, always mean absolute positive constants
(that is, numbers independent of any other parameters).

\subsection {Known results.}

We first discuss best bounds in the ``classical" regime when $\eps \to 0$ much faster than $d\to \infty$.
The first upper bound
$$
 N(\eps, d) \leq \frac{2^{d-1}}{n} \, \prod_{i=1}^{d-1}p_i,
$$
where $p_i$ denotes the $i$th prime, was given by  Rote and Tichy \cite{RT} (see also \cite{DJ}).
It was improved by Larcher (see \cite{AHR}) to
$$
    N(\eps, d) \leq \frac{2^{7d+1}}{\eps} .
$$
Very recently it was improved by Bukh and Chao \cite{BC} to
\begin{equation}\label{bcup}
    N(\eps, d) \leq  \frac{C\, d^2 \ln d}{\eps} .
\end{equation}
Since one clearly has $\d^*(n, d) \geq 1/(n+1)$, we have  $N(\eps, d)\geq 1/\eps-1$,
this shows that for a fixed $d$ and $\eps \to 0$, we have $N(\eps, d)\sim C_d/\eps$.
The first lower bound which grows with the dimension was obtained by Aistleitner, Hinrichs,
and Rudolf, who proved that for every $\eps\in (0, 1/4)$,
\begin{equation}\label{ahbd}
    N(\eps, d)\geq \frac{ \log_2 d}{8\eps }
\end{equation}
(this bound is a combination of  Corollary~1 in \cite{AHR} and   Lemma~2 \cite{AHR}, which implies
 $N(\eps, d)\geq \log_2 (d+1)$ whenever $\eps<1/4$). Moreover, Buch and Chao \cite{BC} proved that
 for $\eps \leq (4d)^{-d}$ one has
\begin{equation}\label{b-c-b}
    N(\eps, d)\geq \frac{ d}{e \eps }.
\end{equation}
We would like to note that from results of Dumitrescu and Jiang \cite{DJ, DJ1} (see also \cite{BC}),
it follows that for every $d$ the following limit exists
$$
  \ell_d = \lim _{n\to \infty} n \, \d^* (n, d) .
$$
In particular, from Buch and Chao bounds it follows that $d/e\leq \ell_d\leq C\, d^2 \ln d$.

On the other hand, if we  fix $\eps$ and consider $d\to \infty$, then
the best upper bound is due to Sosnovec \cite{Sos} who proved
for $\eps <1/4$
\begin{equation}\label{uvs}
    N(\eps, d) \leq C'_\eps \log_2 d .
\end{equation}
This bound matches (\ref{ahbd}), showing $N(\eps, d) \sim C_\eps \log_2 d$ for $\eps<1/4$.
The original proof of Sosnovec does not give a good dependence of $C'_\eps$ on $\eps$.
It was improved in \cite{UV} by Ullrich and Vyb\'iral and later in \cite{AEL} by the first named author
to
\begin{equation}\label{ael-sos}
   C'_\eps \leq  C \, \frac{\ln (e/\eps)}{\eps ^2} .
\end{equation}
We also would like to mention that in the same paper Sosnovec showed that for $\eps>1/4$ ,
$
  N(\eps, d) \leq 1+ (\eps -1/4)^{-1},
$
which was improved by MacKay \cite{KMK} to
$$
  N(\eps, d) \leq \frac{\pi}{\sqrt{\eps -1/4}} -3
$$
for $\eps \in (1/4, 1/2)$. For $\eps \geq 1/2$ we have $N(\eps, d)=1$ (it is enough to consider the point $(1/2, ..., 1/2)$).

We finally discuss the case when both $d$ and $1/\eps$ are growing to $\infty$ with a comparable speed.
In \cite{Rud} Rudolf proved
\begin{equation}\label{rudbou}
   N(\eps, d) \leq \frac{8 d}{\eps}\log_2 \left(\frac{33}{\eps}\r).
\end{equation}
This bound with different numerical constants also follows from  much more general
 results in \cite{BEHW}, where the VC dimension
 of $\Rc_d$  was used, and from the fact that this VC dimension
 equals to $2d$).
Rudolf  used a random choice of points uniformly distributed in $Q_d$.
 His bound  is better than the upper bound (\ref{bcup}) in the regime
$$
 \eps \geq \exp(-C \, d\ln d).
$$
Then in \cite{AEL} the first named author proved that for every $d\geq 2$ and  $\eps \leq 1/2$,
\begin{equation}\label{ael-b}
   N(\eps, d) \leq \frac{C\left(\ln d\, \ln(e/\eps) + d\ln \ln (e/\eps)\r)}{\eps} ,
\end{equation}
which is better than the upper bound (\ref{bcup}) for
$$
 \eps \geq \exp(-C \, d^2).
$$

\subsection{New results}\label{ssnewr}

Our main result is

\begin{thm}\label{main}
Let $d\geq 2$ and $\eps \in (0, 1/2]$. Then
$$
     N(\eps, d)\leq 12e \,\, \frac{4 d \ln \ln (8/\eps)+ \ln (1/\eps)}{\eps }.
$$
Moreover,  the random  choice of points
with respect to the uniform distribution on the cube $Q_d$ gives
the result with high probability.
\end{thm}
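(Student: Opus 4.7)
The plan is a random construction with a union bound. I would sample $X_1,\dots,X_n$ i.i.d.\ uniformly from $Q_d$ with
\[
n=\bigl\lceil 12e\,(4d\ln\ln(8/\eps)+\ln(1/\eps))/\eps\bigr\rceil
\]
and show that with positive probability no $B\in\Rc_d$ of volume at least $\eps$ avoids all of them. The reduction is to a finite approximating family $\Be\subset\Rc_d$ with two properties: (i) for every $B\in\Rc_d$ with $|B|\geq\eps$ there is some $B'\in\Be$ contained in $B$; and (ii) each $B'\in\Be$ satisfies $|B'|\geq\eps/c_0$ for an absolute constant $c_0$, while $\ln|\Be|\leq 4d\ln\ln(8/\eps)+\ln(1/\eps)+O(1)$. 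Granting (i)--(ii),
\begin{equation*}
\Prob\bigl(\exists\,B'\in\Be\colon B'\cap\{X_1,\dots,X_n\}=\emptyset\bigr)\leq |\Be|\,(1-\eps/c_0)^n\leq |\Be|\,e^{-n\eps/c_0},
\end{equation*}
which is strictly below $1$ as soon as $n\geq (c_0/\eps)\ln|\Be|$; by (i), if every $B'\in\Be$ is hit then $\d(\{X_1,\dots,X_n\})<\eps$. The numerical constant $12e$ in the theorem absorbs $c_0$, the $O(1)$ overhead, and a margin that forces the failure probability below a fixed constant, yielding the ``high probability'' conclusion.

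The heart of the proof is constructing $\Be$ with the claimed cardinality. I would parameterize each candidate $B'=\prod_{i=1}^d[a_i',a_i'+s_i')$ by discretized side lengths $s_i'\in S\subset(0,1]$ and positions $a_i'$ on a grid whose mesh depends on $s_i'$. Given $B=\prod[a_i,a_i+s_i)\in\Rc_d$ with $|B|\geq\eps$, I take $s_i'$ to be the largest element of $S$ no larger than $s_i$ and round $a_i$ upward so that $a_i'+s_i'\leq a_i+s_i$, forcing $B'\subset B$. Writing $L=\ln(8/\eps)$, a single-scale logarithmic choice $S=\{(1-\al)^k\}$ produces $\ln|\Be|\sim d\ln(1/\eps)$, which is essentially what underlies Rudolf's bound~\eqref{rudbou}. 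To improve the per-coordinate contribution from $\ln(1/\eps)$ to $\ln L=\ln\ln(8/\eps)$ one must exploit the constraint $\prod s_i\geq\eps$: at most $O(L/\ln L)$ coordinates can carry $s_i$ below a threshold of order $1/L$. This suggests a two-scale scheme, discretizing the few ``short'' sides finely (paying many choices per coordinate but over few coordinates) and the ``long'' sides coarsely with step $\approx 1-1/\ln L$ (paying only $\sim\ln L$ choices per coordinate, but over up to $d$ coordinates). Summing the contributions and multiplying by the number of short-versus-long patterns should yield $\ln|\Be|\leq 4d\ln\ln(8/\eps)+\ln(1/\eps)+O(1)$.

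The principal obstacle is making this two-scale discretization simultaneously compatible with the approximation property~(i) and with the volume lower bound~(ii): the mesh of the position grid must be calibrated to the discretized side length so that $a_i'+s_i'\leq a_i+s_i$ follows automatically, and the counts of side-length patterns, positions on each grid, and short-versus-long splits must be combined without accumulating a spurious factor of $d^d$ or $(1/\eps)^{c}$ that would spoil the target cardinality. Once $\Be$ with the stated properties is in hand, the probabilistic estimate above is a one-line calculation and careful bookkeeping of constants delivers the coefficient $12e$, completing the proof.
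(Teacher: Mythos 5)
You have the right high-level framework: sample $n$ i.i.d.\ uniform points, build a finite family of ``test boxes'' each of volume at least $\eps/c_0$ such that every box of volume $\geq\eps$ contains one, and apply a union bound. This is exactly the paper's Lemma~2.2 (\ref{unb}). The entire substance of the theorem, however, lies in constructing the approximating family with $\ln|\Nc|\lesssim d\ln\ln(1/\eps)+\ln(1/\eps)$ while keeping $c_0$ an \emph{absolute} constant, and this you leave as an acknowledged ``principal obstacle.'' Your proposed two-scale, coordinate-wise side-length discretization cannot deliver it. A per-coordinate multiplicative volume loss compounds across all $d$ coordinates: if ``long'' sides are rounded down by ratio $\approx 1-1/\ln L$ (with $L=\ln(8/\eps)$), then over up to $d$ long coordinates the resulting test box has volume smaller than $\eps$ by a factor as bad as $(1-1/\ln L)^d\approx e^{-d/\ln L}$. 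This is not a fixed $c_0$; for $\eps\sim 1/d$ it is $e^{-d/\ln\ln d}$, which blows up the required $n\gtrsim (c_0/\eps)\ln|\Nc|$ far beyond the target. Forcing the per-coordinate loss down to $1-O(1/d)$ fixes the volume but inflates the number of side-length choices per coordinate to $\sim d\ln(1/\eps)$, giving $\ln|\Nc|\gtrsim d\ln d$, again too large. No product-set discretization of side lengths can balance these two demands.

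The paper's construction (Propositions~\ref{net-anc} and~\ref{net-gen-box}) avoids this precisely by \emph{not} discretizing side lengths coordinate-by-coordinate. After reducing to anchored boxes, the log transform $b\mapsto\left(\ln(1/b_i)/\ln(1/\eps)\right)_{i\leq d}$ carries the set of side-length vectors of volume $\eps^{\beta}$ bijectively onto $\beta H\cap\Cp$, where $S_{d-1}=H\cap\Cp$ is a regular simplex, and the requirement ``$a_i\leq b_i$ for all $i$ with $\prod a_i\geq\eps^{1+\gamma}$'' becomes a covering of $S_{d-1}$ by translates of $-\gamma S_{d-1}$. The Rogers--Zong estimate (Lemma~\ref{rogcov}) bounds this covering number by $7d\ln d\,(1+1/\gamma)^{d-1}$, and the crucial point is that with $\gamma=1/\ln(1/\eps)$ \emph{every} member of the cover has volume exactly $\eps^{1+\gamma}=\eps/e$ --- a loss of the fixed constant $e$, uniformly in $d$, at the price of only $\approx(\ln(e/\eps))^{d}$ side-length patterns. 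The multiplicative volume loss is controlled globally through the geometry of the simplex rather than coordinate-by-coordinate; this is the idea missing from your sketch. (Positions are then handled by a lattice of shifts per anchored box whose cardinality is tamed by an AM--GM argument in Proposition~\ref{net-gen-box}; your ``calibrated mesh'' remark gestures at this but does not resolve the parallel difficulty of bounding $\prod_i(\text{positions in coordinate }i)$ without a spurious $d^d$.)
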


\medskip

\noindent
{\bf Remarks. 1.  } Let us compare this result to the previously known ones. When $\eps \leq  d^{-d}$, we obtain
$$
  N(\eps, d)\leq \frac{C}{\eps }\, \, \ln \left(\frac{1}{\eps}\r).
$$
This improves the upper bound (\ref{ael-b}) by $\ln d$ factor and is very close to \label{b-c-b}
$\log_2 d/(8\eps)$ given by (\ref{ahbd}). On the other hand, when $\eps \geq  e^{-d}$, we get the same upper
bound as (\ref{ael-b}), namely $12 e d \eps^{-1} \ln \ln (8/\eps)$.

\smallskip

\noindent
{\bf 2. }
We  would like to mention, that Hinrichs, Krieg, Kunsch, and Rudolf \cite{HKKR} investigated
the best bound that one can get using a random choice of points uniformly distributed in the cube.
They showed that one cannot
expect anything better than
\begin{equation}\label{lrb}
  \max \left\{ \frac{c}{\eps} \ln\left(\frac{1}{\eps}\r),\, \frac{d}{2 \eps}\r\}.
\end{equation}
Thus our result is the best possible for this method up to $\ln \ln (e/\eps)$ factor in the first summand.

\smallskip

\noindent
{\bf 3. }
Our proof is also based on a random choice of points and is very similar to
proofs in \cite{Rud, AEL}. In such proofs one tries to produce a finite set of ``test" boxes,
such that if a property (in our case --- each test box contains no random points) holds for every test box,
then the property  holds for all boxes. The simplest way to produce such test boxes is
to create a set of axis-parallel boxes of large enough volume such that each axis-parallel box
of volume $\eps$ contains one test box. Since at the end one uses a union bound it is very important
to control the cardinality of the set of test boxes. Rudolf used the concept of $\delta$-cover
\cite{Rud, MG} for this purpose, while the first named author \cite{AEL} used a more direct construction.
In this paper we suggest another construction which seems right for this problem, see Proposition~\ref{net-gen-box}.
The main idea of this construction comes from a work of the second named author on random matrices \cite{GVL}.
We would also like to mention that, surprisingly, our new construction does not lead to any improvement for
large $\eps$, that is for $\eps \geq 1/d$  --- we may apply our new set of test boxes, but the bound will be the same
as in \cite{AEL}.

\bigskip

 Thus, combining bounds of Theorem~\ref{main} with bounds (\ref{bcup}), (\ref{uvs}), and  (\ref{ael-sos}), the current
 state of the art  can be summarized in
 $$
   N(\eps, d)\leq
 \begin{cases}
     \frac{C\, \ln d }{\eps^2}\, \ln \left(\frac{1}{\eps}\r), &\mbox{ if }\, \eps \geq  \frac{\ln^2 d}{d\ln \ln (2d)},\\
     \frac{C \, d }{\eps}\, \ln \ln \left(\frac{1}{\eps}\r), &\mbox{ if }\, \frac{\ln^2 d}{d\ln \ln (2 d)}\geq \eps \geq d^{-d} ,\\
     \frac{C  }{\eps}\, \ln \left(\frac{1}{\eps}\r),  &\mbox{ if }\, d^{-d}\geq \eps \geq  d^{-d^2},\\
     \frac{C\, d^2 \ln d }{\eps}, &\mbox{ if }\, \eps \leq  d^{-d^2},
 \end{cases}
$$
or in the following picture

\bigskip

$\quad \quad\quad \quad \includegraphics[width=0.8\textwidth]{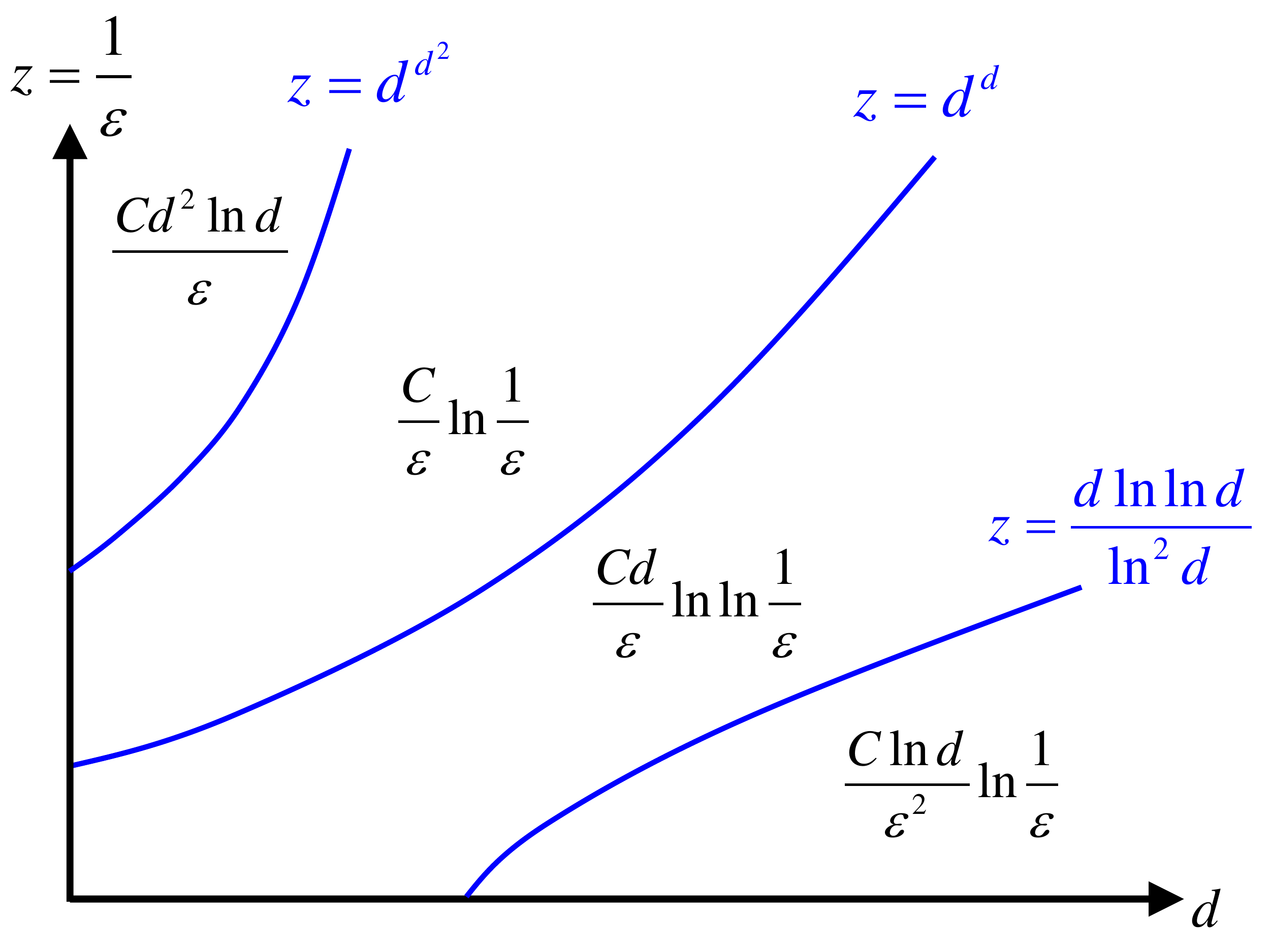}$

\bigskip

Finally, we would like to mention that in terms of the minimal dispersion,
 Theorem~\ref{main} is equivalent to the following theorem.

\begin{thm}\label{disp1}
There exists an absolute constant $C\geq 1$ such that the following holds.
Let $d\geq 2$ and $n\geq 4 d$. Then
$$
  \d^*(n, d)\leq C\,\, \frac{\ln n  +  d\ln \ln (n/d)  }{n } .
$$
Moreover, the random  choice of points
with respect to the uniform distribution on the cube $Q_d$ gives
the result with high probability.
\end{thm}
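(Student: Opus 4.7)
The plan is to derive Theorem~\ref{disp1} as the formal inversion of Theorem~\ref{main}. Since $N(\cdot,d)$ is by definition the inverse of $\d^*(\cdot,d)$, it suffices to produce an $\eps=\eps(n,d)$ satisfying $\eps\leq C(\ln n+d\ln\ln(n/d))/n$ for which Theorem~\ref{main} yields $N(\eps,d)\leq n$; the ``with high probability'' clause then transfers automatically, because the same uniformly random point set that witnesses $N(\eps,d)\leq n$ is the one whose dispersion is at most $\eps$ with high probability. Concretely, I would set
$$
\eps := K\,\frac{\ln n+d\ln\ln(n/d)}{n}
$$
for an absolute constant $K$ to be fixed at the end, and verify that the upper bound in Theorem~\ref{main} for this $\eps$ is at most $n$.

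The verification reduces to two short log-inequalities. First, $\eps\geq K\ln n/n$ trivially gives $\ln(1/\eps)\leq \ln n$. Second, and here the hypothesis $n\geq 4d$ is used essentially, $\ln(n/d)\geq \ln 4$ implies $\ln\ln(n/d)\geq \ln\ln 4>0$, whence $\eps\geq Kd\ln\ln 4 /n$ and therefore $\ln(8/\eps)\leq \ln(n/d)+O(1)$. Using $\ln(n/d)\geq \ln 4>1$ we get $\ln(8/\eps)\leq C_0\ln(n/d)$, and using $\ln\ln(n/d)\geq \ln\ln 4$ once more,
$$
\ln\ln(8/\eps)\leq \ln\ln(n/d)+\ln C_0 \leq C_1\,\ln\ln(n/d)
$$
with an absolute $C_1$. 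Plugging these two estimates into Theorem~\ref{main},
$$
\frac{12e\bigl(4d\ln\ln(8/\eps)+\ln(1/\eps)\bigr)}{\eps}
\leq \frac{12e\bigl(4C_1\,d\ln\ln(n/d)+\ln n\bigr)}{\eps}
\leq \frac{12e\max(4C_1,1)\,n}{K},
$$
which is at most $n$ as soon as $K\geq 48eC_1$. Fixing such a $K$, Theorem~\ref{main} gives $N(\eps,d)\leq n$, i.e. $\d^*(n,d)\leq \eps$, which is the claimed bound with $C=K$.

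The only obstacle is that Theorem~\ref{main} requires $\eps\leq 1/2$, which can fail in a small range where $n$ is close to the threshold $4d$ and $d$ is small. In that range, however, $\eps\geq 1/2$ already, so the trivial bound $\d^*(n,d)\leq 1\leq 2\eps$ immediately recovers the conclusion at the cost of doubling $C$. Apart from this boundary accommodation, the proof is essentially the double-logarithm arithmetic above, and its content lies entirely in identifying the correct trial $\eps$; the probabilistic construction is inherited verbatim from Theorem~\ref{main}, so no new randomness argument is needed.
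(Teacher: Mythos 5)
Your proposal is correct and matches the paper's intent: the paper simply declares Theorem~\ref{disp1} ``equivalent'' to Theorem~\ref{main} without writing out the inversion, and your argument supplies exactly that inversion arithmetic (choosing $\eps=K(\ln n + d\ln\ln(n/d))/n$, verifying the two log-comparisons using $n\geq 4d$, and handling the $\eps>1/2$ boundary via the trivial bound $\d^*(n,d)\leq 1$). The ``high probability'' clause indeed transfers because the same random point set of size $n$ witnessing $N(\eps,d)\leq n$ in Theorem~\ref{main} is the one whose dispersion is below $\eps$.
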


Recall that in the case $\displaystyle 2\ln d \leq n \leq  \frac{d^2 \ln^ 2\ln d}{\ln^2 d}$, a better bound
was proved in \cite{AEL}, namely Theorem~1.3 there (or combination of (\ref{uvs}) with (\ref{ael-sos}))  gives
$$
  \d^*(n, d)\leq \left(\frac{C \,\ln d}{n }\, \, \ln \left(\frac{n}{\ln d}\r)\r)^{1/2}.
$$

\subsection{Dispersion on the torus}
\label{sstorus}

The   dispersion on the torus can be described in terms of
periodic axis-parallel boxes. We denote such a set by $\wt \Rc _d$, that is
\begin{equation}\label{setperrd}
  \wt \Rc _d :=\left\{ \prod _{i=1}^d I_i(a, b) \,\,\, |\,\,\,  a,  b\in  Q_d \r\},
\end{equation}
 where
$$
   I_i(a, b) :=
   \begin{cases}(a_i, b_i), &\mbox{ whenever }\, 0\leq a_i< b_i\leq 1 ,\\
    [0, 1]\setminus  [b_i, a_i], &\mbox{ whenever }\, 0\leq b_i< a_i\leq 1. \end{cases}
$$
The dispersion of a finite set $P\subset Q_d$ on the torus, the minimal dispersion on the torus,
and its inverse are defined in the same way as above, but using sets from $\wt \Rc _d$,
that is
$$
   \wt \d (T) = \sup \{|B| \, \, \, | \, \, \, B\in \wt \Rc _d, \, B\cap T=\emptyset\},
 \quad \quad  \quad
   \wt \d^* (n, d) = \inf_{|P|=n} \wt \d(P)  ,
$$
and
$$
 \wt N(\eps, d) = \min\{ n\in \N\,  | \,\,  \wt \d^* (n, d)\leq \eps\}  .
$$
 The lower bound
$$
    \wt N(\eps, d)\geq \frac{d}{\eps}
$$
was obtained by  Ullrich \cite{MU}.
We would like to emphasize that contrary to the non-periodic
case, even in the case of large $\eps$, the lower bound is at least $d$.
 The upper bound
\begin{equation}\label{per-ael}
  \wt N(\eps, d)\leq \frac{C\, \ln d \, \left(d + \ln (e/\eps)\r)}{\eps} ,
\end{equation}
was obtained by the first named author \cite{AEL}, who improved Rudolf's bound \cite{Rud}
 $(8 d/\eps) \, \left(\ln d +  \ln (8/\eps)\r)$.
Note that since  the VC dimension of
$\wt \Rc _d$ is not linear in $d$ \cite{GLM}, results of \cite{BEHW} would lead
to  worse bounds.
We improve upper bound (\ref{per-ael}) in the case $\eps \leq 1/d$ by removing the factor
$\ln d$ in front of the second summand.

\begin{thm}\label{th-per}
Let $d\geq 2$ and $\eps \in (0, 1/2]$. Then
$$
  \wt  N(\eps, d)\leq 24e\,  \frac{2 d \ln (2d)+ \ln (e/\eps)}{\eps }.
$$
Moreover,  the random  choice of points
with respect to the uniform distribution on the cube $Q_d$ gives
the result with high probability. Equivalently, there exists an absolute constant
$C>1$ such that  for $d\geq 2$ and $n\geq 1$ one has
$$
  \wt   \d^*(n, d)\leq  C\, \frac{d\ln d + \ln n}{n }.
$$
\end{thm}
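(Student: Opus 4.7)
The plan is to follow the probabilistic covering strategy used to prove Theorem~\ref{main}, adapted to the larger class $\wt\Rc_d$. Take
$$
  n = \left\lceil \frac{24e\,\bigl(2d\ln(2d)+\ln(e/\eps)\bigr)}{\eps}\right\rceil
$$
and sample $X_1,\ldots,X_n$ independently and uniformly in $Q_d$; set $P:=\{X_1,\ldots,X_n\}$. It will suffice to construct a finite family $\mathcal F\subset\wt\Rc_d$ of test boxes such that
(i) $|\tilde B|\geq \eps/12$ for every $\tilde B\in\mathcal F$;
(ii) every $B\in\wt\Rc_d$ with $|B|\geq\eps$ contains some $\tilde B\in\mathcal F$; and
(iii) $\ln|\mathcal F|\leq 2d\ln(2d)+\ln(e/\eps)$.
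The union bound then gives
$$
  \Prob\bigl(\exists\,\tilde B\in\mathcal F:\ \tilde B\cap P=\emptyset\bigr)\leq |\mathcal F|(1-\eps/12)^n\leq \exp\bigl(\ln|\mathcal F|-n\eps/12\bigr)<1,
$$
so with positive probability every $\tilde B\in\mathcal F$ meets $P$; by~(ii), every $B\in\wt\Rc_d$ with $|B|\geq\eps$ meets $P$, whence $\wt\d(P)\leq\eps$.

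The construction of $\mathcal F$ is the heart of the argument and is the periodic analogue of Proposition~\ref{net-gen-box}. Each side $I_i(a_i,b_i)$ of a periodic box is an arc on the circle determined by two parameters, so $\wt\Rc_d$ is much richer than $\Rc_d$ and in fact has superlinear VC dimension \cite{GLM}; one therefore cannot hope for the $\ln\ln(8/\eps)$ factor of Theorem~\ref{main}, and the correct replacement turns out to be $\ln(2d)$. Given a target $B$ with arc lengths $\ell_i$, $\prod_i \ell_i\geq\eps$, we build $\tilde B\subseteq B$ by shrinking each arc to a sub-arc of length at least $(1-1/(4d))\ell_i$ whose two endpoints lie on a grid on the torus of mesh comparable to $\ell_i/d$. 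Shrinking costs at most a factor $(1-1/(4d))^d\geq e^{-1/4}$ in volume, so $|\tilde B|\geq \eps/12$, giving~(i). The crucial point, borrowed from~\cite{GVL}, is that the grid in coordinate $i$ is adapted to the unknown local scale $\ell_i$ rather than to the global scale $\eps$; a uniform $\eps$-mesh would inflate the test family to size $\eps^{-2d}$ and destroy~(iii).

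To bound $|\mathcal F|$, stratify test boxes by the dyadic length profile $(k_1,\ldots,k_d)\in\Z_{\geq 0}^d$ encoding $\ell_i\in[2^{-k_i-1},2^{-k_i})$. For a fixed profile, the adapted grid on coordinate $i$ has $O(d\cdot 2^{k_i})$ points, so the number of admissible endpoint pairs $(a_i,b_i)$ with the correct length class is $O(d^2\cdot 2^{k_i})$, hence at most $(Cd^2)^d\cdot 2^{\sum_i k_i}$ test boxes per profile for an absolute constant $C$. The constraint $\prod_i\ell_i\geq\eps$ forces $\sum_i k_i\leq \log_2(1/\eps)$, so the positional contribution is at most $(Cd^2)^d/\eps$; summing over admissible profiles inflates this by only an absolute factor (since the largest stratum dominates), and after absorbing constants in $24e$ we arrive at $\ln|\mathcal F|\leq 2d\ln(2d)+\ln(e/\eps)$, i.e.~(iii).

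The main obstacle is precisely the interplay between the length-adapted mesh and the stratified counting above: a naive uniform discretization gives the wrong power of $\eps$, while a careless summation over length profiles reintroduces a $\ln(1/\eps)$ factor in place of $\ln(2d)$. The adaptive multi-scale construction of~\cite{GVL} is what makes the positional contribution $(2d)^{2d}$ and the total length budget $1/\eps$ combine additively in the logarithm. The equivalent formulation for $\wt\d^*$ follows by inversion: given $d\geq 2$ and $n\geq 1$, choose $\eps$ so that $n$ equals the right-hand side of the first estimate; solving for $\eps$ yields $\wt\d^*(n,d)\leq C(d\ln d+\ln n)/n$ for an absolute constant $C>1$, completing the proof.
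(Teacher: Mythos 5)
Your overall scheme is the same as the paper's: sample $n$ i.i.d.\ uniform points, build a finite family of test boxes of volume $\geq\delta$ that sits inside every $B\in\wt\Rc_d$ with $|B|\geq\eps$, and take a union bound (this is Lemma~2.3/\ref{unb} plus Propositions~\ref{net-anc},~\ref{net-tor-box}). You have also correctly identified the crucial idea: the positional grid in coordinate $i$ must have mesh proportional to the \emph{local} length $\ell_i$, not to $\eps^{1/d}$ or to some global scale. What differs is how you discretize the \emph{shapes}: you stratify by dyadic length profiles $(k_1,\ldots,k_d)$, whereas the paper builds an anchored approximating set by mapping boxes of fixed volume onto a face $H\cap\Cp$ of a regular simplex via $f_\eps(t)=\ln(1/t)/\ln(1/\eps)$ and covering $S_{d-1}$ by translates of $-\gamma S_{d-1}$ (Rogers--Zong), then shifting each anchored shape by a grid $\Lc_B$ adapted to that shape. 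The paper's split ``anchored shape $\times$ position'' gives cleaner bookkeeping and explicit constants; your dyadic version is morally equivalent here because in the periodic case the positional term $d\ln(2d)$ dominates the shape term anyway.

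There is, however, a concrete error in your counting. You assert that summing over admissible profiles ``inflates this by only an absolute factor (since the largest stratum dominates).'' It is true that $\sum_{m=0}^{M}\binom{m+d-1}{d-1}2^{m}$ is within a factor $2$ of its last term, but the last term carries the multiplicity $\binom{M+d-1}{d-1}$ of profiles with $\sum_i k_i=M=\lfloor\log_2(1/\eps)\rfloor$, and this is \emph{not} an absolute constant: it grows with both $d$ and $1/\eps$. So the per-profile bound $(Cd^2)^d/\eps$ does not simply multiply by an $O(1)$ factor, and (iii) as stated, $\ln|\mathcal F|\leq 2d\ln(2d)+\ln(e/\eps)$, is not established by your argument. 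The situation is rescued because $\ln\binom{M+d-1}{d-1}\leq (d-1)\ln\!\bigl(e(M+d-1)/(d-1)\bigr)\leq C\bigl(d\ln(2d)+\ln(1/\eps)\bigr)$, so you still get $\ln|\mathcal F|\leq C'\bigl(d\ln(2d)+\ln(e/\eps)\bigr)$ for an absolute $C'>1$; that suffices for the ``equivalently'' form of the theorem but not for the sharp constant $24e$ in the displayed bound, which your slack factor of $2e$ in the union bound may or may not absorb depending on the precise constants in your grid construction. You should either redo the profile summation honestly (absorbing $\ln\binom{M+d-1}{d-1}$ into $d\ln(2d)+\ln(e/\eps)$ with explicit constants) or switch to the paper's simplex-covering route, which avoids the issue entirely by keeping the shape count at $7d\ln d\,(1+1/\gamma)^{d-1}$ and the positional count at $(2d)^d/\eps^{1+\gamma}$.
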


\bigskip

The proof is essentially the same as for Theorem~\ref{main},
but some adjustments are required in the construction of approximating sets. This leads to a slightly
worse bound. See the remark preceding Proposition~\ref{net-tor-box} for the details.
 We would also like to note that the
Hinrichs--Krieg--Kunsch--Rudolf's result on best possible lower bound (\ref{lrb}) which may be obtained
by using random points uniformly distributed on the cube holds for the periodic setting as well,
therefore the summand $\ln (e/\eps)$  is unavoidable by this method.

\section{Preliminaries}
\label{intro}

Given a positive integer $m$ we denote $[m]=\{1, 2, ..., m\}$.
Recall that the sets $\Rc _d$ and $\wt \Rc _d$ were defined in (\ref{setrd}) and (\ref{setperrd})
respectively. Given $\eps >0$, we consider sets of (periodic) axis-parallel boxes of volume at least $\eps$,
$$
   \Be :=\Big\{ B\in \Rc _d   \,\,\, |\,\,\,  |B|\geq \eps \Big\}
   \qand  \Bep :=\Big\{ B\in \wt \Rc _d   \,\,\, |\,\,\,  |B|\geq \eps \Big\}.
$$
We also consider {\it anchored} axis-parallel boxes (that is, containing the origin as a vertex), defined as
\begin{equation}\label{anc-def}
   \Bae :=\Big\{ B\in \Rc _d^0   \,\,\, |\,\,\,  |B|\geq \eps \Big\}, \, \mbox{ where } \,
  \Rc _d^0 :=\left\{ \prod _{i=1}^d I_i  \,\,\, |\,\,\,  I_i =[0, b_i) \subset [0, 1] \r\}.
\end{equation}

\begin{defin}[$\delta$-approximation for $\Be$] \label{delta-net}
 Given $0<\delta\leq \eps \leq 1$ we say that $\Nc \subset \Rc _d$
is a $\delta$-approximation for $\Be$ if for every $B\in \Be$
there exists $B_0\in \Nc$ such that $B_0\subset B$ and
$$
 |B_0|\geq \delta.
$$
We define a $\delta$-approximation for $\Bae$ and $\Bep$ in a similar way.
\end{defin}

\medskip

\noindent
{\bf Remark.  }
This definition is a slight modification of the notions of $\delta$-net and $\delta$-dinet from
\cite{AEL}. An essentially the same notion was recently considered in a similar context by M.~Gnewuch
\cite{MGn}.

\medskip

A variant of the following lemma using random points and the union bound
was proved in  \cite{Rud} (see Theorem~1 there). We will use the following
formulation taken from \cite{AEL} (see Lemma~2.3 and Remark~2.4
there). The proof in \cite{AEL} was provided for $\delta$-nets, but it is easy
to check that the same proof works for $\delta$-approximations.

\begin{lemma}\label{unb}
Let $d\geq 1$ and  $\eps, \delta  \in (0,1)$. Let $\Nc$ be  a $\delta$-approximation for $\Be$
and
let   $\wt \Nc$ be a $\delta$-approximation for $\Bep$. Assume both $|\Nc|\geq 3$ and $|\wt \Nc|\geq 3$.
 Then
$$
   N(\eps, d) \leq  \frac{3\ln |\Nc|}{\delta} \quad  \qand\quad   \wt N(\eps, d)\leq \frac{3\ln |\wt \Nc|}{\delta } .
$$
Moreover, the random choice of independent points (with respect to the uniform distribution on $Q_d$)
 gives the result with probability at least  $1-1/|\Nc|$.
\end{lemma}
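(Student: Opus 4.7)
The plan is to choose $n\approx 3\ln|\Nc|/\delta$ points independently and uniformly at random in $Q_d$ and then control the failure event ``some test box in $\Nc$ is missed by all sample points'' by a union bound. If the failure probability is at most $1/|\Nc|$, the random configuration simultaneously witnesses the bound on $N(\eps,d)$ and the ``moreover'' claim.

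Concretely, I set $n:=\lfloor 3\ln |\Nc|/\delta\rfloor$, which is at least $1$ thanks to $|\Nc|\geq 3$ and $\delta\in(0,1)$. Let $P:=\{X_1,\dots, X_n\}$ with $X_i$ i.i.d.\ uniform on $Q_d$. For each $B_0\in \Nc$, $|B_0|\geq \delta$ by the definition of $\delta$-approximation, so
$$
  \Prob[B_0\cap P=\emptyset]=(1-|B_0|)^n\leq (1-\delta)^n\leq e^{-n\delta},
$$
and a union bound over $\Nc$ gives
$$
   \Prob[\exists\, B_0\in \Nc:\ B_0\cap P=\emptyset]\leq |\Nc|\, e^{-n\delta}.
$$
The floor estimate $n>3\ln|\Nc|/\delta-1$ combined with $\delta<1$ yields $n\delta>3\ln|\Nc|-1$, hence $|\Nc|\,e^{-n\delta}< e/|\Nc|^2\leq 1/|\Nc|$, where the last inequality uses $|\Nc|\geq 3>e$.

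On the complementary event every $B_0\in \Nc$ meets $P$; and for any $B\in \Be$ the $\delta$-approximation supplies $B_0\in \Nc$ with $B_0\subset B$, so $B\cap P\supseteq B_0\cap P\neq\emptyset$. Therefore every axis-parallel box disjoint from $P$ has volume strictly less than $\eps$, so $\d(P)\leq \eps$ and $N(\eps, d)\leq n\leq 3\ln|\Nc|/\delta$. The periodic bound is proved identically, with $\Rc_d$, $\Be$, $\Nc$ replaced by $\wt\Rc_d$, $\Bep$, $\wt\Nc$: a uniform point in $Q_d$ lands in a periodic axis-parallel box with probability equal to its Lebesgue measure, so the same chain of estimates goes through without change.

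There is no genuine obstacle; this is a textbook probabilistic-method argument. The only delicate point is matching the constant $3$ to the stated probability: it has to absorb both the up-to-$1$ rounding loss from the floor and the factor $e$ arising in $|\Nc|\,e^{-n\delta}<e/|\Nc|^2$, which is precisely the reason for the hypothesis $|\Nc|\geq 3$ rather than the weaker $|\Nc|\geq 2$.
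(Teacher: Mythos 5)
Your proof is correct and uses exactly the standard argument the paper outsources to \cite{Rud} (Theorem~1) and \cite{AEL} (Lemma~2.3 and Remark~2.4): pick $n\approx 3\ln|\Nc|/\delta$ i.i.d.\ uniform points, union-bound the event that some test box is missed, and conclude via the $\delta$-approximation property. One small imprecision: the definition of a $\delta$-approximation does not force \emph{every} $B_0\in\Nc$ to have $|B_0|\geq\delta$; you should either restrict the union bound to the subfamily of boxes with $|B_0|\geq\delta$ (which is all you need, and its size is at most $|\Nc|$) or note that one may assume this without loss of generality.
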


We finally discuss covering numbers.
Let  $K$ and  $L$ be subsets of a linear space $X$. The covering number
$N (K, L)$ is defined as the smallest integer $N$ such that there are
$x_1$, ..., $x_N$ in $X$ satisfying
\begin{equation}
\label{covering}
 K\subset \bigcup _{i=1}^N (x_i +L).
\end{equation}

For a convex body $K \subset \RR^m$ and $\gamma \in (0,1)$, we will
need an upper bound for the covering number $N (K, -\ga K)$. We could
use a standard volume argument, which would be sufficient for our results,
but we prefer to use a more sophisticated estimate by Rogers-Zong
\cite{RZ}, which leads to slightly better constants.

Let $m\geq 1$,   we set  $\theta _m=\sup \theta (K)$, where
the supremum is taken over all convex bodies $K\subset \RR ^m$
and $\theta (K)$ is the covering density
of $K$ (see \cite{Rog} for the definition and more details). It is
known (see \cite{Ro}, \cite{Rog}) that $\theta _1=1$, $\theta
_2\leq 1.5$, and, by a result of Rogers,
$$
\theta _m \leq \inf_{0<x<1/m} (1+x)^m (1- m \ln x) <
m (\ln m +  \ln \ln m + 5)
$$
for $m \ge 3$. We will use following lemma from \cite{RZ}.

\begin{lemma} \label{rogcov}
Let $m>1$ and $K$ and $L$ be two convex bodies in $\Rm$.
Then
$$
  N  (K, L) \leq  \theta _m \frac{|K-L|}{|L|} ,
$$
in particular, for every $\ga >0$.
$$
  N (K, -\ga K) \leq  7 m\ln m  \left(\frac{1+\ga}{\ga}\r)^m .
$$
\end{lemma}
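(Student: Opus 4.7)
The plan is to derive the first inequality from Rogers' translative-covering machinery and then to deduce the second by a direct computation.

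First I would recall the definition of the covering density $\theta(L)$: for each $\eta>0$ one can choose a discrete set $X\subset\R^m$ such that $\bigcup_{x\in X}(L+x)=\R^m$ and
$$
 \limsup_{R\to\infty}\frac{|L|\cdot |X\cap [-R,R]^m|}{(2R)^m}\leq \theta(L)+\eta\leq \theta_m+\eta.
$$
A translate $L+x$ is useful for covering $K$ precisely when $(L+x)\cap K\neq\emptyset$, i.e.\ when $x\in K-L$, and therefore $N(K,L)\leq |X\cap(K-L)|$.

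Next I would bound $|X\cap(K-L)|$ by an averaging trick. For any $t\in\R^m$ the shifted family $\{L+x+t : x\in X\}$ remains a full covering of $\R^m$. Averaging the count $|(X+t)\cap(K-L)|$ over $t$ uniform in $[-R,R]^m$ and applying Fubini, each $x\in X$ contributes $|(K-L-x)\cap [-R,R]^m|$, which tends to $|K-L|$ once $x$ lies well inside the cube, while the boundary contribution is $O(R^{m-1})$. Comparing with the density formula, the average equals $\theta(L)|K-L|/|L|+o(1)$ as $R\to\infty$, so at least one translate $X+t$ achieves a count of at most $\theta_m|K-L|/|L|$. Taking $\eta\to 0$ gives the first inequality.

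For the special case $L=-\gamma K$, convexity of $K$ yields $K-L=K+\gamma K=(1+\gamma)K$, and hence $|K-L|/|L|=((1+\gamma)/\gamma)^m$. It remains to check that $\theta_m\leq 7m\ln m$ for every $m\geq 2$. For $m=2$ we have $\theta_2\leq 3/2\leq 14\ln 2$, and for $m\geq 3$ the Rogers bound $\theta_m<m(\ln m+\ln\ln m+5)$ recorded in the excerpt reduces the inequality to $\ln\ln m+5\leq 6\ln m$, which is elementary and holds already at $m=3$.

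The only genuinely nontrivial step is the Fubini/averaging argument with its boundary estimates; this is classical Rogers theory and is treated in detail in \cite{RZ}, so in the write-up I would either reproduce it in a few lines or cite it directly.
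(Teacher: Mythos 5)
Your proposal is correct, but be aware that the paper does not prove this lemma at all: it is stated with the sentence ``We will use the following lemma from \cite{RZ},'' so the first inequality is a cited result of Rogers--Zong and the ``in particular'' clause is just the immediate consequence of the bounds on $\theta_m$ displayed in the preceding paragraph. What you have written is a faithful reconstruction of the classical Rogers averaging argument that underlies \cite{RZ}: take a covering system $X$ of density at most $\theta_m+\eta$, observe that a translate $L+x$ can meet $K$ only when $x\in K-L$, average the count $|(X+t)\cap(K-L)|$ over $t$ and apply Fubini, and let $R\to\infty$ and $\eta\to 0$. The deduction of the second inequality is also right: convexity gives $K-(-\ga K)=K+\ga K=(1+\ga)K$, hence the volume ratio is $((1+\ga)/\ga)^m$, and your verification that $\theta_m\leq 7m\ln m$ for all $m\geq 2$ (from $\theta_2\leq 3/2$ and, for $m\geq 3$, from $\theta_m< m(\ln m+\ln\ln m+5)$ together with $\ln\ln m+5\leq 6\ln m$) is correct. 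The only place where your write-up would need to be expanded in a final version is the boundary-error and integrality bookkeeping in the averaging step, which you correctly flag and propose to cite; with that caveat the argument is complete.
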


\section{Cardinality of approximating sets}
\label{card}

We start with anchored boxes. The following lemma in a more general setting was proved in
\cite{GVL} (see Lemma~3.10 there). We provide a direct proof in our setting. Recall that
$\Bae$ was defined by (\ref{anc-def}).

\begin{prop}\label{net-anc}
Let $d\geq 2$ be an integer, $\eps \in (0,1)$, and $\ga>0$. Let $\delta = \eps^{1+\ga}$.
Then the size of an optimal $(\eps^{1+\gamma})$-approximation of $\Bae$
equals to
$$
   N(S_{d-1}, - \ga S_{d-1})\leq  7 d\ln d  \left(\frac{1+\ga}{\ga}\r)^{d-1},
$$
where $S_{d-1}$ is a regular $(d-1)$-dimensional simplex.
\end{prop}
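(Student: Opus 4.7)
The plan is to pass to logarithmic side lengths, transform the approximation condition into a coordinate-wise dominance relation on a simplex, and recognize the resulting problem as the standard covering number of a regular $(d-1)$-simplex by translates of its negative $\ga$-multiple. First I would parametrize an anchored box $B = \prod_{i=1}^d [0, b_i)$ by $x_i := \ln(1/b_i) \in [0, \infty)$. Then $|B| \geq \eps$ is equivalent to $\sum_i x_i \leq L$ with $L := \ln(1/\eps)$, containment $B_0 \subset B$ between anchored boxes becomes the coordinate-wise inequality $x_0 \geq x$, and $|B_0| \geq \eps^{1+\ga}$ becomes $\sum_i x_{0,i} \leq L' := (1+\ga) L$. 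Thus an optimal $(\eps^{1+\ga})$-approximation of $\Bae$ corresponds to a minimal finite set $\mathcal{N} \subset L'\Delta$ (where $\Delta := \{y \in \R^d_+ : \sum y_i \leq 1\}$) with the property that every $x \in L\Delta$ is dominated coordinate-wise by some $x_0 \in \mathcal{N}$.

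Next I would reduce to the boundary face $F := L\Delta \cap \{\sum y_i = L\}$, a regular $(d-1)$-simplex. Any $x \in L\Delta$ with $s := \sum x_i < L$ is dominated by its lift $\tilde x := x + \tfrac{L-s}{d}\mathbf{1} \in F$, so it suffices to cover $F$; similarly, we may assume $\mathcal{N} \subset F' := L'\Delta \cap \{\sum y_i = L'\}$, since replacing $x_0 \in L'\Delta$ by $x_0 + t\mathbf{1}$ for suitable $t \geq 0$ only enlarges its dominance set while keeping it in $L'\Delta$. For $x \in F$ and $x_0 \in F'$, the dominance $x_0 \geq x$ is equivalent to $x_0 - x \in \ga F$, because the identity $\sum_i (x_{0,i} - x_i) = L' - L = \ga L$ together with nonnegativity of each coordinate pins $x_0 - x$ to the scaled face $\ga F$.

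Finally, I would recast this as a covering problem in the affine hyperplane $H := \{\sum y_i = L\}$. Shifting by $-\tfrac{L}{d}\mathbf{1}$ realizes $F$ as a regular $(d-1)$-simplex $S_{d-1}$ centered at the origin of the $(d-1)$-dimensional linear subspace $H_0 := \{\sum y_i = 0\}$, and $\ga F$ as $\ga S_{d-1}$. The condition $F \subset \bigcup_{x_0 \in \mathcal{N}} (x_0 - \ga F)$ becomes
$$
   S_{d-1} \;\subset\; \mathcal{M} + (-\ga S_{d-1})
$$
in $H_0 \cong \R^{d-1}$, for $\mathcal{M}$ the corresponding translate of $\mathcal{N}$. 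Hence the minimum possible $|\mathcal{N}|$ is exactly $N(S_{d-1}, -\ga S_{d-1})$, and Lemma~\ref{rogcov} applied in dimension $m = d-1$ delivers the stated upper bound $7 d \ln d \bigl((1+\ga)/\ga\bigr)^{d-1}$. The main obstacle is the equivalence in the second paragraph: checking that the reduction to the faces $F$ and $F'$ is cost-free and that on these faces the dominance relation between anchored boxes is literally a simplex inclusion; once this is verified, the identification with the Rogers--Zong covering setting is immediate.
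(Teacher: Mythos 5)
Your proposal is correct and follows essentially the same route as the paper: both take a logarithmic change of variables $b_i \mapsto \ln(1/b_i)$ (the paper additionally normalizes by $\ln(1/\eps)$ so that the constraint reads $\sum = 1$ rather than $\sum = L$), both reduce the coordinate-wise dominance condition to covering the face simplex by translates of its negative $\ga$-dilate, and both finish with Lemma~\ref{rogcov}. The only cosmetic difference is the order of reductions: the paper restricts at the outset to boxes of volume exactly $\eps$ and $\eps^{1+\ga}$ and then exhibits the bijection with $\beta H \cap \mathcal{C}_+$, while you work with the full subsimplex $L\Delta$ and then argue that it suffices to cover the top face $F$ and that approximators may be pushed to $F'$ — these are equivalent.
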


\begin{proof}
We first identify each box in $\Rc_d^0$ with its right upper corner, that is,
each box $B=\prod _{i=1}^{d} [0, b_i)$ we identify with $b=\{b_i\}_{i=1}^{d}$.
Since each box $B\in \Bae$ contains an anchored box of volume precisely equal to
$\eps$ we may restrict ourself to considering only boxes of volume $\eps$.

For $\beta \geq 1$ consider the sets
$$
   \Aeb=\left\{b=\{b_i\}_{i=1}^{d}\in Q_d \,\, | \, \,  \prod _{i=1}^{d}  b_i = \eps^\beta \r\}
$$
(we use them with $\beta =1$ and $\beta =1+\ga$). It is enough to prove that
there exists a set  $\Nc_0\subset \Aeg$ (of an appropriate cardinality)
such that for every $b=\{b_i\}_{i=1}^{d}\in \Ae$ there exists
$a=\{a_i\}_{i=1}^{d}\in \Nc_0$ satisfying $a_i\leq b_i$ for every $i\leq d$.

Consider the function $f_\eps \, : \, (0,1] \to [0, \infty)$ defined by
$$
    f_\eps (t) = \frac{\ln (1/t)}{\ln (1/\eps)}.
$$
 Note that if $b_i\geq 0$, $i\leq d$, are such that $\prod_i b_i = \eps^\beta$, then
 $$
   \sum_{i=1}^d f_\eps (b_i) = \sum_{i=1}^d  \frac{\ln (1/b_i)}{\ln (1/\eps)}
   =  \frac{1}{\ln (1/\eps)}\,\,  \ln \prod_{i=1}^d  (1/b_i) =\beta.
 $$
 Let $F_\eps \, : \, (0, 1]^d \to (0, \infty]^d$  be defined by  $F_\eps(\{x_i\}_{i=1}^{d}) = \{f_\eps (x_i)\}_{i=1}^{d}$.
Denote
\begin{align*}
   \Cp&:=\{ x=\{x_i\}_{i=1}^{d}\in \R^d \,\, |\, \,\forall i\leq d : \, x_i \geq 0\},   \\
   \Cm&:=\{ x=\{x_i\}_{i=1}^{d}\in \R^d \, \, |\,\, \forall i\leq d : \, x_i \leq 0\}, \quad \mbox{ and } \\
   H&:=\{ x=\{x_i\}_{i=1}^{d}\in \R^d \,\, |\,\, \sum_{i=1}^d x_i =1 \}.
\end{align*}
Note that for each fixed $\beta \geq 1$ the function $F_\eps$ is a bijection between
$\Aeb$ and $\bt H \cap \Cp$. Thus it is enough to check that
there exists a set  $\Nc_1 \subset  (1+\ga) H \cap \Cp$ such that for every $x=\{x_i\}_{i=1}^{d}\in  H \cap \Cp$ there exists
$y=\{y_i\}_{i=1}^{d}\in \Nc_1$ satisfying $y_i\geq x_i$ for every $i\leq d$ (note here that $f_\eps$ is a decreasing function).

Identify $H$ with $(d-1)$-dimensional Euclidean space $X$ centered at $e:=(1/n, ..., 1/n)$. Let $S_{d-1} = H \cap \Cp$ (the regular
simplex with vertices at the standard basis vectors of $\R^d$).
Given $y\in   (1+\ga) H \cap \Cp$ consider the set $S(y) := (y+\Cm) \cap H$. Then $y$ serves as an approximation for all points
in $S(y)\cap S_{d-1}$ in the above sense, that is, for every point $x\in S(y)$ we have $y_i\geq x_i$ for every $i\leq d$. In other words, we need
to estimate the cardinality of a (minimal) set of $y$'s such that the sets $S(y)$ cover $S_{d-1}$.  Noticing that $S(y)$ is a shift
of $-\gamma S_{d-1}$ (where the multiplication of $S_{d-1}$ by $-\gamma$ is taken in $X$ with respect to the origin $e$), this means
that we have to estimate the covering number $N(S_{d-1}, - \ga S_{d-1})$. Applying  Lemma~\ref{rogcov} we complete the proof.
\end{proof}

\medskip

Next we obtain a bound for cardinality of  a $\de$-approximation for $\Be$.

\begin{prop}\label{net-gen-box}
Let $d\geq 2$ be an integer, $\eps \in (0,1)$, and $\ga>0$. Let $\delta = \eps^{1+\ga}/4$.
There exists a $\de$-approximation $\Nc$  for $\Be$ of cardinality at most
$$
     7 d\ln d \, \frac{ (1+1/\ga)^{d }(\ln (e/\eps^{1+\ga}))^d}{\eps^{1+\ga} }  .
$$
\end{prop}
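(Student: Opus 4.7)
The plan is to lift Proposition~\ref{net-anc} from anchored to general axis-parallel boxes by combining its simplex covering with a per-coordinate multiscale discretization of the position parameters. A box $B = \prod_{i=1}^d [a_i, b_i) \in \Be$ is parametrized by the left endpoints $a_i$ and the side-lengths $\ell_i = b_i - a_i$; relative to the anchored case one must additionally handle the $d$ position parameters, and these are what produce the extra $(1+1/\gamma)\,(\log(e/\eps^{1+\gamma}))^d/\eps^{1+\gamma}$ factor.

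I would proceed in three stages. First, for each coordinate $i$ I introduce a family of dyadic position grids $G^{(m)} = \{k \cdot 2^{-m} : 0 \leq k \leq 2^m\}$ for scales $m = 0, 1, \ldots, M$ with $M$ of order $\log(1/\eps^{1+\gamma})$, and snap $a_i$ upward to the nearest point $a'_i \in G^{(m_i)}$, where the scale $m_i$ is chosen so that $2^{-m_i}$ is a controlled fraction (of order $\gamma$) of $\ell_i$. This yields a shortened interval $[a'_i, b_i)$ that still retains most of $\ell_i$. Second, I apply Proposition~\ref{net-anc} with a slightly weakened exponent $1+\gamma'$ (with $\gamma'\sim \gamma$ up to absolute constants) to the anchored box $\prod_{i=1}^d [a'_i, b_i)$, whose volume remains comparable to $\eps$; this produces approximating side-lengths $(\ell'_i)$ from a set of cardinality at most $7 d \ln d\cdot((1+\gamma)/\gamma)^{d-1}$, satisfying $\ell'_i \leq b_i - a'_i$ and $\prod \ell'_i \geq \eps^{1+\gamma}/4$ after absorbing the constants from the position snapping. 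Third, I count: for a fixed scale tuple $(m_i)$ the number of position choices is $\prod (2^{m_i}+1)$, and by the scale choice $2^{m_i} \leq C/(\gamma\,\ell'_i)$, so for each fixed side-length tuple the position count is at most $(C/\gamma)^d / \prod \ell'_i \leq (C/\gamma)^d/\eps^{1+\gamma}$. Summing over the $O(\log(1/\eps^{1+\gamma}))^d$ admissible scale tuples and over the simplex-covered side-length tuples produces the stated bound, with the extra factor $(1+1/\gamma)$ beyond $(1+1/\gamma)^{d-1}$ coming from the positional scale tuning.

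The main obstacle I expect is the bookkeeping in the counting step: the positional scale $m_i$ is dictated by the unknown side-length $\ell_i$, whereas the simplex approximator $\ell'_i$ is only guaranteed to satisfy $\ell'_i \leq \ell_i$ and can be much smaller. To collapse the count into the clean product $(1+1/\gamma)^d(\log(e/\eps^{1+\gamma}))^d/\eps^{1+\gamma}$ rather than a nested sum, the choice of $m_i$, the relaxed exponent $1+\gamma'$ in the simplex step, and the various absolute constants must be coordinated so that the position contribution is effectively controlled by $1/\prod \ell'_i$ rather than by $1/\prod \ell_i$. Tuning all parameters so that the final volume loss from position snapping and simplex approximation combined is at most a factor of $4$, delivering $\delta = \eps^{1+\gamma}/4$ exactly, is the delicate part of the argument.
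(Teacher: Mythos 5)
Your high-level plan — cover the side-length parameters by the anchored (simplex) net of Proposition~\ref{net-anc}, then discretize the positions — is the same decomposition the paper uses, but the specific discretization you propose has two problems that are not just bookkeeping and would not deliver the stated bound.

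\textbf{Volume loss forces $\gamma\lesssim 1/d$.} You snap $a_i$ upward by at most $2^{-m_i}\sim\gamma\ell_i$, so the shrunken box has sides $\approx(1-\gamma)\ell_i$ and volume $\approx(1-\gamma)^d\prod\ell_i$. To keep the final loss an absolute constant (your stated target of a factor $4$), you need $(1-\gamma)^d\gtrsim 1$, i.e.\ $\gamma\lesssim 1/d$. But Proposition~\ref{net-gen-box} is stated for all $\gamma>0$, and in the application to Theorem~\ref{main} one takes $\gamma=1/\ln(1/\eps)$, which is \emph{larger} than $1/d$ precisely in the regime $\eps\geq e^{-d}$ where the $d\ln\ln(1/\eps)$ term dominates. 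The paper avoids this by choosing a grid step in coordinate $i$ equal to $b_i/d$, where $b_i$ is the side of the \emph{approximating anchored box} $B\in\Nc_0$, not $\gamma\ell_i$. That yields a uniform loss $c_d^d=(1-1/d)^d\geq 1/4$ regardless of $\gamma$, which is exactly what feeds into $\delta=\eps^{1+\gamma}/4$.

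\textbf{The $(\log)^d$ scale enumeration is not needed and over-counts.} Because the grid step is chosen to depend only on the net box $B$ (a single grid $\Lc_B$ per $B\in\Nc_0$), the paper never has to enumerate scale tuples. The count per coordinate is the number of admissible shifts, $1-d+d/b_i=(d/b_i)(1-c_d b_i)$, and the product over $i$ factors as $\frac{d^d}{\delta_0}\prod_i(1-c_d b_i)$. The crucial step — which your sketch has no analogue of — is that $\prod_i(1-c_d b_i)\leq(1-c_d\delta_0^{1/d})^d\leq(\ln(e/\delta_0)/d)^d$ by two applications of AM--GM, so the $d^d$ cancels and one is left with $(\ln(e/\delta_0))^d/\delta_0$. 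Your argument instead bounds each coordinate by $\sim 1/(\gamma\ell_i')$, and then multiplies by the number of scale tuples $\sim(\log(1/\eps^{1+\gamma}))^d$. This introduces a spurious $(1/\gamma)^d$ factor \emph{on top of} the $(\log)^d$, giving roughly $(1/\gamma)^d(\log)^d/\eps^{1+\gamma}$ for the positional part, whereas the target is $(\log(e/\eps^{1+\gamma}))^d/\eps^{1+\gamma}$ with no separate $(1/\gamma)^d$. Matching the stated bound would require the $1/(\gamma\ell_i')$ counts to already encode the logarithmic saving, which they do not. You flagged that reconciling $\ell_i$ versus $\ell_i'$ is delicate; the actual obstruction is sharper: the grid scale must be keyed to $\ell_i'/d$ (the known approximating side over $d$), not to $\gamma\ell_i$, and then the logarithmic factor must come from the AM--GM cancellation inside the product of shift counts rather than from enumerating scales.

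Once both adjustments are made (grid step $b_i/d$ anchored to the net box, and a direct AM--GM bound on $\prod(1-c_d b_i)$ rather than a multiscale sum), one recovers the paper's proof exactly; as written your construction would either fail for $\gamma\gtrsim 1/d$ or carry an extra $(1/\gamma)^d$ in the cardinality.
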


\begin{proof}
Let $\delta_0 = \eps^{1+\ga}$ and $\Nc_0$  be $\de_0$-approximation  for $\Bae$ of cardinality at most
$7 d\ln d  \left(1+1/\ga\r)^d$ constructed in Proposition~\ref{net-anc}.
In order to construct $\de$-approximation for $\Be$ we consider shifts of
multiples of boxes from  $\Nc_0$.
For each $B= \prod _{i=1}^{d} [0, b_i)\in  \Nc_0$ we consider the following set of points, that will be used for
shifts,
$$
   \Lc _{B} := \left\{ \{k_i b_i /d\}_{i=1}^{d} \, \, \, | \,  \, \, \forall i\leq d : \, k_i\in \Z, \, 1\leq k_i \leq 1-d+d/b_i\r\}.
$$
Denoting $c_d=1-1/d$ and  using that  $\prod _{i=1}^{d}  b_i = \de_0$
and the inequality between arithmetic and geometric means twice,
we observe
\begin{align*}
  \left|\Lc _B\r| &\leq \prod _{i=1}^{d}  \left(1-d +\frac{d}{b_i}\r) = \prod _{i=1}^{d} \frac{d}{b_i} \left(1-c_d b_i\r) \leq
  \frac{d^d}{\de_0 }\,  \left(1-\frac{c_d}{d}\sum _{i=1}^{d} b_i\r)^d\\
  &\leq \frac{d^d}{\de_0 }\,  \left(1- c_d \left(\prod _{i=1}^{d} b_i\r)^{1/d}\r)^d =
  \frac{d^d}{\de_0 }\,  \left(1- c_d \de_0^{1/d}\r)^d.
\end{align*}
Since
$$
    c_d \de_0^{1/d}\geq \left(1-\frac{1}{d}\r)\left(1-\frac{\ln (1/\de_0)}{d}\r) \geq 1- \frac{\ln (e/\de_0)}{d},
$$
we obtain $$\left|\Lc _B\r|\leq \frac{(\ln (e/\de_0))^d}{\de_0}.$$

Next consider a box
$$K= \prod _{i=1}^{d} [x_i, y_i)\in  \Be.$$
Denote $x=\{x_i\}_{i=1}^{d}$, $y=\{y_i\}_{i=1}^{d}$, and $a=\{a_i\}_{i=1}^{d}=y-x.$
Then $K=x+ A$, where
$$A= \prod _{i=1}^{d} [0, a_i)\in  \Bae.$$
 Let
$$
  B= \prod _{i=1}^{d} [0, b_i)\in  \Nc_0
$$
be a box which $\delta_0$-approximates $A$.  Then, since $x+B \subset x+A=K\subset  Q_d$,
we  have $0\leq x_i\leq 1-b_i$ for all $i\leq d$. Therefore,
 for every $i\leq d$ there exists a positive integer $k_i(x)$ such that
 $$
      \frac{(k_i(x)-1)b_i}{d} \leq x_i< \frac{k_i(x)  b_i}{d}   \qand \frac{k_i(x)  b_i}{d} \leq 1-b_i+ \frac{b_i}{d}.
 $$
Take $z_i = k_i(x)  b_i/d$ and $z=  \{z_i\}_{i=1}^{d}$. Then $z\in \Lc _{B}$ and
$$
   K\supset \prod _{i=1}^{d} [x_i, x_i+b_i) \supset \prod _{i=1}^{d} [z_i, z_i+c_d b_i) = z+
    \prod _{i=1}^{d} [0, c_d b_i) = z+c_dB.
$$
This implies that
$$
       \Nc : = \bigcup_{B\in \Nc_0}\, \bigcup_{z\in \Lc _{B}}\, (z + c_d B)
$$
is $(c_d^d \, \de_0)$-approximation for $\Be$ of cardinality
$$
  |\Nc| \leq |\Nc_0 |\,  \frac{(\ln (e/\de_0))^d}{\de_0} \leq
  7 d\ln d \, \frac{ (1+1/\ga)^{d }(\ln (e/\eps^{1+\ga}))^d}{\eps^{1+\ga} } .
$$
Since $c_d^d\geq 1/4$ for $d\geq 2$, this implies the desired result.
\end{proof}

\bigskip

\noindent
{\bf Remark. } Note that dealing with periodic boxes and having a periodic box $x+\prod _{i=1}^{d} [0, b_i)$
we cannot conclude that $x_i +b_i \leq 1$, therefore, in the proof above, we have to consider  all possible
$x_i\leq 1$. Thus, for each box $B\in \Bae$ we will have to adjust the definition of $\Lc _{B}$ to
$$
  \Lc _{B}:= \left\{ y =\{k_i b_i /d\}_{i=1}^{d} \, \, |  \, \, \forall i\leq d : \, k_i\in \Z, \, 1\leq k_i \leq 1+d/b_i\r\}.
$$
This will change the upper bound of cardinality of $\Lc _{B}$ to
\begin{align*}
  |\Lc _{B}|= \prod _{i=1}^{d}  \left(1 +\frac{d}{b_i}\r) &\leq  \prod _{i=1}^{d} \frac{2d}{b_i} = \frac{(2d)^d}{\de_0}.
\end{align*}
The rest of the proof will be same with minor adjustments to the periodic intervals. This will lead to the following proposition.

\begin{prop}\label{net-tor-box}
Let $d\geq 2$ be an integer, $\eps \in (0,1)$, and $\ga>0$. Let $\delta = \eps^{1+\ga}/4$.
There exists a $\de$-approximation $\Nc$  for $\Bep$ of cardinality at most
$$
     7 d\ln d \, \frac{ (1+1/\ga)^{d }(2d)^d}{\eps^{1+\ga} }  .
$$
\end{prop}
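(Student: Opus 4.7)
The plan is to mirror the proof of Proposition~\ref{net-gen-box}, carrying out the single modification flagged in the remark preceding the proposition. Set $\delta_0 := \eps^{1+\ga}$ and start from the anchored $\delta_0$-approximation $\Nc_0$ supplied by Proposition~\ref{net-anc}, whose cardinality satisfies $|\Nc_0| \leq 7d\ln d\,(1+1/\ga)^{d-1} \leq 7d\ln d\,(1+1/\ga)^d$. The set $\Nc$ will again be a union of translates of scaled anchored boxes, but the collection of permissible translates has to be enlarged to reflect that periodic boxes may wrap around the torus.

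Concretely, every $K \in \Bep$ can be written as $K = s + \prod_{i=1}^d [0,\ell_i)$ on the torus for some $s \in [0,1)^d$ and side-lengths $(\ell_i)$ with $\prod\ell_i \geq \eps$. Applying Proposition~\ref{net-anc} to $\prod[0,\ell_i)$ produces $B = \prod[0,b_i) \in \Nc_0$ with $b_i \leq \ell_i$ and $\prod b_i \geq \delta_0$. For each such $B$ I would define the enlarged shift lattice
$$
\Lc_B := \bigl\{(k_i b_i/d)_{i=1}^d : k_i \in \Z,\ 1 \leq k_i \leq 1 + d/b_i\bigr\},
$$
whose cardinality is bounded by $\prod_{i=1}^d (1 + d/b_i) \leq \prod_{i=1}^d (2d/b_i) = (2d)^d/\delta_0$ (using $b_i \leq 1 \leq d$).

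Given $K$ and the chosen $B$, I would select $z \in \Lc_B$ coordinatewise so that $z_i - s_i \in [0, b_i/d]$ modulo $1$; this is possible because the $i$-th coordinates of $\Lc_B$ form a grid of spacing $b_i/d$ that covers the whole torus. Writing $c_d = 1 - 1/d$, the slack $\ell_i - c_d b_i \geq b_i - c_d b_i = b_i/d$ is exactly enough to absorb the misalignment $z_i - s_i$, which yields the torus inclusion $z + c_d B \subset K$. Since $|c_d B| = c_d^d \prod b_i \geq \delta_0/4 = \delta$ for $d \geq 2$, the set
$$
\Nc := \bigcup_{B \in \Nc_0}\,\bigcup_{z \in \Lc_B}\,(z + c_d B),
$$
interpreted on the torus, is a $\delta$-approximation for $\Bep$, and its cardinality is at most $|\Nc_0|\cdot(2d)^d/\delta_0 \leq 7d\ln d\,(1+1/\ga)^d (2d)^d/\eps^{1+\ga}$, which is the stated bound.

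The main obstacle is verifying the torus containment $z + c_d B \subset K$: this has to hold simultaneously in every coordinate and must properly handle intervals that wrap around $0$, unlike the non-periodic argument where the whole box sits genuinely inside $Q_d$. Once the slack estimate is in place, the cardinality computation is routine. Note that, in contrast to Proposition~\ref{net-gen-box}, the enlarged range of $k_i$ prevents the AM--GM trick that produced the sharper factor $(\ln(e/\delta_0))^d$ there, and this is precisely why the final estimate contains a factor of $(2d)^d$ instead.
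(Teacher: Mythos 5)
Your proposal is correct and follows exactly the route the paper sketches in the remark preceding the proposition: you keep the anchored net $\Nc_0$ from Proposition~\ref{net-anc}, enlarge the shift lattice $\Lc_B$ to allow $1\leq k_i\leq 1+d/b_i$ so that the translates reach all of $[0,1)$ (rather than only $[0,1-b_i]$), bound $|\Lc_B|\leq (2d)^d/\delta_0$, and conclude via the union over $B\in\Nc_0$ and $z\in\Lc_B$. The only addition you make beyond the paper's terse remark is spelling out the coordinatewise torus containment $z+c_dB\subset K$ using the slack $\ell_i-c_db_i\geq b_i/d$, which is exactly the ``minor adjustment to the periodic intervals'' the paper alludes to; your observation that the AM--GM step is lost because $k_i$ can now be as large as $1+d/b_i$ is also accurate and explains the $(2d)^d$ factor.
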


Propositions \ref{net-gen-box} and \ref{net-tor-box} together with Lemma~\ref{unb} immediately imply
the main results. We provide proofs for completeness.

\begin{proof}[Proof of Theorems~\ref{main} and \ref{th-per}.]
We choose $\gamma =1/\ln (1/\eps)$, so that $\eps ^{1+\ga}= \eps/e$. Let $\delta = \eps^{1+\ga}/4=\eps/(4e)$.
Let $\Nc$ and $\Nc'$ be $\delta$-approximations constructed in Propositions~\ref{net-gen-box} and \ref{net-tor-box} with
cardinalities
$$
  |\Nc|  \leq
  7 d\ln d \, \frac{ (1+1/\ga)^{d }  \, (\ln (e/\eps^{1+1/\ga}))^d}{\eps^{1+\ga}}
  \leq  7e d\ln d \,  \frac{(\ln (e/\eps))^{d }  \, (\ln (e^2/\eps))^d}{ \eps}
$$
and
$$
  |\Nc'|  \leq
  7 d\ln d \, \frac{ (1+1/\ga)^{d }(2d)^d}{\eps^{1+\ga} }
  \leq  7e d\ln d \,  \frac{(\ln (e/\eps))^{d }  \, (2d)^d}{ \eps}
$$
Thus
$$
  \ln |\Nc| \leq 2d \ln \ln (e^2/\eps)+ \ln (1/\eps) + \ln (7e  d\ln d)\leq 4d \ln \ln (8/\eps)+ \ln (1/\eps)
$$
and
\begin{align*}
  \ln |\Nc'| &\leq d \ln \ln (e/\eps)+ \ln (1/\eps) + d\ln (2d) + \ln (7e  d\ln d)
  \\&  \leq 2d\ln \ln (e/\eps) + 2d \ln (2d) +\ln (1/\eps)
  \\&  \leq 4d \ln (2d)+2 \ln (e/\eps) .
\end{align*}
Lemma~\ref{unb} implies the result.
\end{proof}

\section{k-dispersion}
\label{k-disp}
Following \cite{HPUV}, given $k\geq 0$ and a finite set $P\subset Q_d$ we define its $k$-dispersion as

$$
   \kd (P) = \sup \{|B| \, \, \, | \, \, \, B\in \Rc _d, \, |B\cap P|\leq k\} .
$$
In this way the standard dispersion is $0$-dispersion. A similar notion in the context
of star discrepancy of a given set and anchored boxes was considered in \cite{ET, GSW}.
Then the minimal $k$-dispersion is defined as the function of two variables $n$ and $d$ as
$$
 \kd^* (n, d) = \inf_{P\subset Q_d\atop |P|=n} \kd(P) .
$$
Clearly, if $k\geq n$ then $\kd^* (n, d) =1$, therefore we consider $k\leq n$ only.
Moreover, by partitioning  $Q_d$ in two axis-parallel
boxes of volume $1/2$, we immediately get that,
\begin{equation}\label{largek}
    1/2\leq \kd^* (n, d) \leq 1 \quad \mbox{ for }  \quad n/2\leq k \leq n.
\end{equation}

As above, we will work with its inverse,
$$
  N_k(\eps, d) = \min\{ n\geq k\,  | \,\,   \kd^* (n, d)\leq \eps\} .
$$
In \cite{HPUV} the following bound was proved
$$
 \frac{1}{8}\, \min\left\{1, \frac{k +\log_2 d}{n}\r\}\leq
 \kd^*(n, d) \leq C \max\left\{ \ln n \sqrt\frac{\ln d}{n}, \, \frac{k \ln (n/k)}{n}\r\},
$$
or, equivalently,
$$
 c\,\,  \frac{k +\log_2 d}{\eps}  \leq  N_k (\eps, d)\leq C \max\left\{  \frac{\ln^2(e/\eps) }{\eps^2}\,\, \ln d , \, \frac{k \ln (e/\eps)}{\eps}\r\}.
$$
Note that in the cases $k\leq \ln d$ or $k>\ln d$ and  $\eps \leq \frac{\ln d}{k}$ the upper bound behaves as $((\ln (e/\eps))/\eps)^2 \ln d$
which cannot be sharp as $\eps \to 0$. We improve the upper bound in the next theorem.

\begin{thm}\label{main-torus}
Let $d\geq 2$, $k\geq 0$,  and $\eps \in (0, 1/2]$. Then
$$
     N_k (\eps, d)\leq 80e \, \frac{d \ln \ln (8/\eps)+ k \ln (e/\eps)}{\eps }.
$$
Moreover, the random choice of independent points (with respect to the uniform distribution on $Q_d$)
 gives the result with probability tending to 1 as either $d\to \infty$ or $\eps\to 0$.
Equivalently, there exists an absolute constant $C>0$ such that
 for $n\geq 4d$ and $k\leq n/2$, one has
$$
  \kd^*(n, d)\leq C\,\, \frac{k \ln (n/k)  +  d\ln \ln (n/d)  }{n } .
$$
\end{thm}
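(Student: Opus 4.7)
The plan is to follow the proof of Theorem~\ref{main} almost verbatim, replacing the ``no point misses'' estimate by a Chernoff-type tail bound for the corresponding Binomial variable. Take the $\de$-approximation $\Nc$ for $\Be$ from Proposition~\ref{net-gen-box} with $\ga=1/\ln(1/\eps)$, so that $\de=\eps^{1+\ga}/4=\eps/(4e)$ and, as in the proof of Theorem~\ref{main},
$$
\ln|\Nc|\le 4d\ln\ln(8/\eps)+\ln(1/\eps)+O(\ln d).
$$

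Sample $n$ independent uniform points $P\subset Q_d$. For each $B_0\in\Nc$ the count $X_{B_0}:=|B_0\cap P|$ is Binomial with parameters $n$ and $p=|B_0|\ge\de$, so, using $\binom{n}{k}\le(en/k)^k$ and $1-p\le e^{-p}$,
$$
\Prob(X_{B_0}\le k)\le \binom{n}{k}(1-p)^{n-k}\le \left(\frac{en}{k}\right)^{k}e^{-\de(n-k)},
$$
with the convention $(en/k)^k=1$ when $k=0$. A union bound over $\Nc$ then yields
$$
\Prob\bigl(\exists B_0\in\Nc:\ X_{B_0}\le k\bigr)\le |\Nc|\left(\frac{en}{k}\right)^{k}e^{-\de(n-k)}.
$$
If this event fails, then since every $B\in\Be$ contains some $B_0\in\Nc$, we conclude $|B\cap P|\ge k+1$ for every such $B$, i.e.\ $\kd(P)<\eps$, and hence $N_k(\eps,d)\le n$.

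It remains to choose $n$ for which the last displayed quantity tends to zero as $\eps\to 0$ or $d\to\infty$; taking logarithms, this reduces to
$$
\de(n-k)\;\ge\;\ln|\Nc|+k\ln(en/k)+\omega(1).
$$
Plug in $n=80e(d\ln\ln(8/\eps)+k\ln(e/\eps))/\eps$. One checks directly that $n\ge 2k$ (since $\eps\le 1/2$ forces $\ln(e/\eps)\ge 1$), so $\de(n-k)\ge \de n/2=10\bigl(d\ln\ln(8/\eps)+k\ln(e/\eps)\bigr)$, which comfortably dominates $4d\ln\ln(8/\eps)+\ln(1/\eps)+O(\ln d)$. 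The one nontrivial point is to absorb $k\ln(en/k)$ into $k\ln(e/\eps)$: when $k\ln(e/\eps)\ge d\ln\ln(8/\eps)$ we have $n/k\le C\ln(e/\eps)/\eps$ and $\ln(en/k)\le 2\ln(e/\eps)+O(1)$, while in the opposite regime $k\le d\ln\ln(8/\eps)/\ln(e/\eps)$ and the term $k\ln(en/k)$ is dominated by the $d$-term. This case split is the main (mild) obstacle; once handled, all constants fit inside the factor $80e$, and the equivalent statement for $\kd^*(n,d)$ follows by inversion exactly as in the passage from Theorem~\ref{main} to Theorem~\ref{disp1}.
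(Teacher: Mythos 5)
Your argument is correct and follows essentially the same route as the paper: the paper packages the binomial tail estimate $\binom{N}{k}(1-\delta)^{N-k}<(eN/k)^k e^{-(N-k)\delta}$ and the union bound over the net into a standalone Lemma~\ref{k-unb}, which gives $N_k(\eps,d)\le \frac{5}{\delta}\bigl(\ln|\Nc|+k\ln(e/\delta)\bigr)$ for any $\delta$-approximation $\Nc$ of $\Be$, and then applies it with exactly the $\Nc$ from Proposition~\ref{net-gen-box} and the choice $\ga=1/\ln(1/\eps)$, $\delta=\eps/(4e)$. The only difference is the final bookkeeping: the paper verifies $2\ln|\Nc|+k\ln(eN/k)\le\delta(N-k)$ for $N=\lfloor 5\ln|\Nc|/\delta+5k\ln(e/\delta)/\delta\rfloor$ by splitting $\delta(N-k)$ and using that $x\mapsto x/\ln(ex)$ is increasing, whereas you substitute the target $n$ directly and do a case split on which of the two summands dominates — both work, though the paper's intermediate bound avoids the case analysis.
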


Note that for $k=0$ this is Theorem~\ref{main} and that in view of (\ref{largek}), we don't consider
$k\geq n/2$ in the ``moreover" part of the theorem. The proof of Theorem~\ref{main-torus} for $k\geq 1$
repeats the proof of Theorem~\ref{main}, we just need to slightly adjust Lemma~\ref{unb} in the  following  way.

\begin{lemma}\label{k-unb}
Let $d\geq 1$, $k\geq 1$, and  $\eps, \delta  \in (0,1)$. Let $\Nc$ be  a $\delta$-approximation for $\Be$ such that $|\Nc|\geq 3$.
 Then
$$
   N_k(\eps, d) \leq  \frac{5}{\delta} \left( \ln |\Nc| + k\ln (e/\delta)\r)   .
$$
Moreover, the random choice of independent points (with respect to the uniform distribution on $Q_d$)
 gives the result with probability at least  $1-1/|\Nc|$.
\end{lemma}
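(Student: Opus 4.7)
The plan is to follow the proof scheme of Lemma~\ref{unb}, replacing the ``some test box is missed'' event by the ``some test box contains at most $k$ points'' event, and controlling the latter by a binomial tail.

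Take $n := \lceil 5\delta^{-1}(\ln|\Nc| + k\ln(e/\delta))\rceil$ and let $X_1, \dots, X_n$ be i.i.d.\ uniform points in $Q_d$; set $P:=\{X_1,\dots,X_n\}$. For a fixed $B_0 \in \Nc$ (so $|B_0|\geq \delta$), the event $\{|B_0\cap P|\leq k\}$ is exactly the event that some subset of $n-k$ indices all miss $B_0$, and a union bound over the $\binom{n}{n-k}=\binom{n}{k}$ such subsets gives
$$
\Prob(|B_0\cap P|\leq k)\leq \binom{n}{k}(1-|B_0|)^{n-k}\leq \binom{n}{k}(1-\delta)^{n-k}.
$$

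A further union bound over $B_0\in\Nc$, combined with $\binom{n}{k}\leq (en/k)^k$ and $1-\delta\leq e^{-\delta}$, yields
$$
\Prob\bigl(\exists\, B_0\in\Nc:\ |B_0\cap P|\leq k\bigr)\leq |\Nc|\,(en/k)^k e^{-\delta(n-k)},
$$
and it suffices to verify this is at most $1/|\Nc|$, i.e.\ $\delta(n-k)\geq 2\ln|\Nc|+k\ln(en/k)$. Setting $a:=\ln(e/\delta)\geq 1$ and $b:=\ln|\Nc|/k$, the choice of $n$ gives $\delta n = 5(\ln|\Nc|+ka)=5k(a+b)$, hence $\ln(en/k)=\ln 5 + a +\ln(a+b)$. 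Using $\delta k\leq k$ and $\ln(a+b)\leq a+b$ (valid since $a\geq 1$), the required inequality reduces to $3a+2b\geq \ln 5 + 1$, which holds because $a\geq 1$.

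On the complementary event, every $B_0\in\Nc$ meets $P$ in at least $k+1$ points; the $\delta$-approximation property then forces every $B\in\Be$ to contain some such $B_0$, so $|B\cap P|\geq k+1$. Consequently any box $B\in\Rc_d$ with $|B\cap P|\leq k$ satisfies $|B|<\eps$, giving $\kd(P)\leq \eps$; this yields both the claimed deterministic bound on $N_k(\eps,d)$ and the ``moreover'' probability statement. The only delicate point is that $n$ appears inside the logarithm $\ln(en/k)$, so one must confirm that the chosen $n$ is simultaneously large enough to absorb this self-referential term --- this is precisely what the constant $5$ (together with the $\ln(e/\delta)$ factor in the definition of $n$) is arranged to do.
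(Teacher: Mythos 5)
Your proof is correct and follows essentially the same strategy as the paper: bound the probability that a fixed test box contains at most $k$ of the random points by a union bound over the $\binom{n}{k}$ index sets that could all miss the box, take a further union bound over $\Nc$, and then verify that the chosen $n$ satisfies the resulting inequality $\delta(n-k)\geq 2\ln|\Nc|+k\ln(en/k)$. Your verification of that last inequality is done a bit more directly than the paper's --- you substitute $a=\ln(e/\delta)$, $b=\ln|\Nc|/k$ and use $\ln(a+b)\leq a+b$, whereas the paper appeals to the monotonicity of $x/\ln(ex)$ and splits the target inequality into two halves --- but both are routine and arrive at the same conclusion.

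One small technical slip: you take $n=\lceil 5\delta^{-1}(\ln|\Nc|+k\ln(e/\delta))\rceil$, which only certifies $N_k(\eps,d)\leq\lceil M\rceil$ rather than the claimed $N_k(\eps,d)\leq M$ (for non-integer $M$ these differ by one, and since $N_k$ is integer-valued the claim is equivalent to $N_k\leq\lfloor M\rfloor$). You should instead take $n=\lfloor M\rfloor$, as the paper does; then $n\leq M$ gives $\ln(en/k)\leq\ln 5+a+\ln(a+b)$, which is the bound your computation actually uses, while $n\geq M-1$ only costs an additional $\delta\leq 1$ on the left-hand side, which is comfortably absorbed if you tighten $\ln(a+b)\leq a+b$ to $\ln(a+b)\leq a+b-1$. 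Relatedly, your line ``the choice of $n$ gives $\delta n=5(\ln|\Nc|+ka)$'' treats $\delta n$ as exactly equal to $5k(a+b)$, which ignores the rounding; with the floor the correct statement is $5k(a+b)-\delta\leq\delta n\leq 5k(a+b)$, and the argument should be phrased with the two one-sided bounds used where each is needed.
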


\begin{proof}
Let $\Nc$ be a $\delta$-approximation for $\Be$. Consider $N$ independent random points
$X_1$, ..., $X_N$ uniformly chosen from $Q_d$. By the definition of a $\delta$-approximation, it is enough
to show that with the required probability, there exists a realization of $X_i$'s with the following property:
every $B\in \Nc$ with $|B|\geq \delta $ contains at least $k+1$ points. Fix a box $B\in \Nc$.
Let $\mathcal E$ be the event that $B$ contains at most $k$ points out of $X_i$'s.
Then there exists $A\subset [N]$ with cardinality $|A|=N-k$ such that for every $j\in A$, $X_j\not\in B$.
Thus
\begin{align*}
  \Prob \left(\mathcal E\r) &\leq \Prob \left( \left\{\exists A\subset [N] \,\,\, | \,\, \, |A|= N-k, \, \, \forall j\in A :\,  X_j\notin B \r\}\r)
  \\&
  \leq \sum_{ A\subset [N] \atop |A|= N-k } \Prob \left(  \forall j\in A :\,  X_j\notin B \r) \leq {N \choose k} \, \left(1 - \delta\r)^{N-k}
  \\&
  < \left(\frac{eN}{k}\r)^k \, \exp(-(N-k)\delta).
\end{align*}
Therefore, by the union bound,
$$
  \Prob \left( \left\{\exists B\in \Nc :   \, \, \mbox{$B$ contains at most $k$ points} \r\}\r)
  <
   |\Nc|\left(\frac{eN}{k}\r)^k \, \exp(-(N-k)\delta).
$$
Thus, as far as $|\Nc|\left(\frac{eN}{k}\r)^k \, \exp(-(N-k)\delta)\leq 1/|\Nc|$, $X_j$'s satisfy the desired property with required probability. This inequality is equivalent to
\begin{equation}\label{lastin}
  2 \ln |\Nc| + k \ln \frac{eN}{k} \leq \delta (N-k).
\end{equation}
It remains to show that
$$
   N= \left\lfloor \frac{5\ln |\Nc|}{\delta} + \frac{5k\ln (e/\delta)}{\delta}   \r\rfloor
$$
satisfies (\ref{lastin}). First note that such a choice of $N$ satisfies $N\geq 5k$, hence
\begin{equation}\label{lastin-1}
  \delta (N-k) \geq \frac{4 \delta N}{5}\geq 4 \ln |\Nc|.
\end{equation}
We have also
$N/k\geq 5 \delta ^{-1} \,  \ln (e/\delta)$.  Using that $f(x)= x/(\ln (ex))$ is increasing on $(1, \infty)$),
the latter inequality implies that
$N/k\geq 2.5 \delta ^{-1} \,  \ln (eN/k)$.
This leads to
  \begin{equation}\label{lastin-2}
  \delta (N-k) \geq \frac{4 \delta N}{5}\geq k \ln \frac{eN}{k}.
\end{equation}
Since (\ref{lastin-1}) and (\ref{lastin-2}) yield (\ref{lastin}),
this completes the proof.
\end{proof}

\subsection*{Acknowledgments}
We would like to thank Michael Gnewuch for his remarks on the first draft of this paper as well as
for showing us references \cite{GSW, ET}. We also thankful for referees for careful reading.

\nocite{*}

\address

\end{document}